\newtheorem{theorem}{Theorem}
\newtheorem{lemma}[theorem]{Lemma}
\theoremstyle{definition}
\newtheorem{remark}[theorem]{Remark}
\begin{document}

\title[Scottish Book Problem 155]{On the Scottish Book Problem 155\\
by Mazur and Sternbach}

\author[M. Mori]{Michiya Mori}

\dedicatory{Dedicated to Professor Yasuyuki Kawahigashi on the occasion of his $60+1$st birthday.}

\address{Graduate School of Mathematical Sciences, The University of Tokyo, 3-8-1 Komaba, Meguro-ku, Tokyo, 153-8914, Japan; Interdisciplinary Theoretical and Mathematical Sciences Program (iTHEMS), RIKEN, 2-1 Hirosawa, Wako, Saitama, 351-0198, Japan.}
\email{mmori@ms.u-tokyo.ac.jp}

\thanks{The author is supported by JSPS KAKENHI Grant Number 22K13934.}
\subjclass[2020]{Primary 46B04.} 

\keywords{Scottish Book; Banach space; isometry}

\date{}

\begin{abstract}
Problem 155 of the Scottish Book asks whether every bijection $U\colon X\to Y$ between two Banach spaces $X, Y$ with the property that, each point of $X$ has a neighborhood on which $U$ is isometric, is globally isometric on $X$. 
We prove that this is true under the additional assumption that $X$ is separable and the weaker assumption of surjectivity instead of bijectivity.
\end{abstract}

\maketitle
\thispagestyle{empty}

According to \cite{S}, Problem 155 of the famous Scottish Book posed by Mazur and Sternbach on November 18 in 1936 asks the following.
\begin{quote}
Given are two spaces $X, Y$ of type (B), $y = U(x)$ is a one-to-one mapping of the
space $X$ onto the whole space $Y$ with the following property: For every $x_0 \in X$ there
exists an $\varepsilon>0$ such that the mapping $y = U(x)$, considered for $x$ belonging to the
sphere with the center $x_0$ and radius $\varepsilon$, is an isometric mapping. Is the mapping
$y = U(x)$ an isometric transformation?
\end{quote}

Let us rephrase this problem in a modern manner. 
For a point $x_0$ in a Banach space $X$ and a positive real number $r$, we use the symbol $B(x_0,r):=\{x\in X\mid \lVert x-x_0\rVert \leq r\}$ for the ball with center $x_0$ and radius $r$.  
Note that a \emph{sphere} in the Scottish Book usually means a ball in this sense. (A set of the form $S(x_0, r):=\{x\in X\mid \lVert x-x_0\rVert =r\}$ is called the \emph{surface of a sphere} in the Scottish Book. If we interpret that the ``sphere'' in the above problem means $S(x_0, r)$, then one may encounter simple counterexamples, see Remark \ref{r}.)

In what follows, we assume that $X, Y$ are two real Banach spaces ($=$ spaces of type (B)). 
A mapping $f\colon D\to E$  between subsets $D\subset X$, $E\subset Y$ is called an \emph{isometry} if it satisfies $\lVert f(x_1)-f(x_2)\rVert =\lVert x_1-x_2\rVert$ for any $x_1, x_2\in D$. 
The above problem translates into the following.
\begin{quote}
Let $U\colon X\to Y$ be a bijection with the following property: 
For every $x_0 \in X$ there exists an $\varepsilon>0$ such that the mapping $U$ restricted to $B(x_0, \varepsilon)$ is an isometry. 
Is $U\colon X\to Y$ an isometry on $X$?
\end{quote}

One may find no commentaries on Problem 155 in the second printed edition of the Scottish Book \cite{S} published in 2015. 
It seems that the solution to this problem is still unknown. 
The purpose of this note is to give a positive solution to this problem under the additional assumption that $X$ is separable and the weaker assumption of surjectivity instead of bijectivity.
The nonseparable case remains open.

The celebrated Mazur--Ulam theorem \cite{MU} asserts that every surjective isometry between two real Banach spaces is affine.
A version of the Mazur--Ulam theorem due to Mankiewicz \cite{M} asserts that any surjective isometry between balls of Banach spaces extends uniquely to an affine surjective isometry of the whole Banach spaces.
We give lemmas that are relevant to the Mazur--Ulam and Mankiewicz's theorems.

\begin{lemma}\label{l1}
Let $x_0\in X$ and $r>0$.
Let a subset $D\subset X$ satisfy $B(x_0, r)\subset D$. 
If $g\colon D\to Y$ is an isometry satisfying $g(x)=x$ for every $x\in B(x_0, r)$, then $g(x)=x$ holds for every $x\in D\cap B(x_0, 2r)$.
\end{lemma}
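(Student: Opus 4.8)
The plan is to fix, after translating so that $x_0=0$, a single point $x\in D$ with $\|x\|\le 2r$ and prove $g(x)=x$; if $\|x\|\le r$ this is part of the hypothesis, so assume $r<\|x\|\le 2r$ and set $z:=g(x)$. Since $g$ is an isometry on $D$ that is the identity on $B(0,r)$, for every $y\in B(0,r)$ we have $\|z-y\|=\|g(x)-g(y)\|=\|x-y\|$. Taking $y=0$ gives $\|z\|=\|x\|$, and taking $y=x/2$ (legitimate because $\|x/2\|\le r$) gives $\|z-x/2\|=\|x/2\|$, whence $\|z-x\|\le\|z-x/2\|+\|x/2-x\|=\|x\|\le 2r$. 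Writing $u:=z-x$ and substituting $y=x-w$, so that $y$ ranges over $B(0,r)$ exactly when $w$ ranges over $B(x,r)$, the statement reduces to the purely normed-space assertion: if $\|w+u\|=\|w\|$ for every $w\in B(x,r)$ and $\|x\|\le 2r$, then $u=0$.

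To prove this assertion I would run a Mazur--Ulam / Mankiewicz type argument. The natural ingredient is the point reflection $\sigma(w):=x-w$ through the midpoint $x/2$ of $0$ and $x$: it swaps $0$ and $x$, fixes $x/2$, and maps $B(x,r)$ onto $B(0,r)$. The hypothesis $\|x\|\le 2r$ says precisely that $B(0,r)$ and $B(x,r)$ overlap --- they share the point $x/2$, equivalently $\mathrm{dist}(0,B(x,r))\le r$ --- and this overlap is what lets one transport information between the two balls while keeping everything inside $D$, since $\sigma(B(x,r))=B(0,r)\subseteq D$. The translation $w\mapsto w+u$ is a norm-preserving bijective isometry from $B(x,r)$ onto $B(z,r)$, and one plays it against the reflections $\sigma$ and $\sigma'(w):=z-w$ in the usual supremum-and-doubling manner: a suitable displacement can be doubled, so the a priori finite supremum of displacements (finite because $\|u\|\le 2r$) must be $0$, which forces $z=x$. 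If the norm happens to be strictly convex all of this can be bypassed: the equality $\|z\|=\|z-x/2\|+\|x/2\|$ already forces $z$ to lie on the ray $\mathbb{R}_{\ge 0}\,x$, and then $\|z\|=\|x\|$ gives $z=x$ at once.

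The main difficulty is exactly the lack of strict convexity. The triangle-inequality equalities produced along the way (such as $\|z\|=\|z-x/2\|+\|x/2\|$) do not, in a general Banach space, force any collinearity --- $\ell^\infty$ is the cautionary example --- so the argument cannot be closed in one step and has to be iterated in the Mazur--Ulam fashion. A secondary technical point is that $g$ is defined only on $D$ and $B(x,r)$ need not be contained in $D$, so the reflections have to be organized so as never to leave $D$; the inequality $\|x\|\le 2r$ is again what makes this work, and it is also what is used to handle the boundary case $\|x\|=2r$, where $B(0,r)\cap B(x,r)$ may have empty interior and a short separate (or limiting) argument is needed.
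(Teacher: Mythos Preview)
Your reduction is sound: after translating to $x_0=0$, the information available is exactly $\lVert z-y\rVert=\lVert x-y\rVert$ for every $y\in B(0,r)$, equivalently $\lVert w+u\rVert=\lVert w\rVert$ for every $w\in B(x,r)$ with $u=z-x$, and the task is to deduce $u=0$ when $\lVert x\rVert\le 2r$. You are also right that a Mazur--Ulam mechanism is what is needed and that strict convexity would short-circuit everything.

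The gap is in the execution of that mechanism. Your phrase ``the usual supremum-and-doubling manner'' hides the only nontrivial step, and the obvious doubling does \emph{not} work here: from $\lVert w+u\rVert=\lVert w\rVert$ on $B(x,r)$ you get $\lVert w+2u\rVert=\lVert w\rVert$ only for those $w$ with $w+u\in B(x,r)$, i.e.\ $\lVert w-x+u\rVert\le r$, which you cannot guarantee (a priori $\lVert u\rVert$ is only $\le\lVert x\rVert\le 2r$). Likewise the V\"ais\"al\"a reflection trick $g^*=\sigma g^{-1}\sigma g$ lives on a \emph{family} of globally defined isometries fixing two points; here you have neither a global isometry nor a natural family over which to take the supremum, and your reflections $\sigma,\sigma'$ send $B(x,r)$ and $B(z,r)$ to $B(0,r)$ and back without producing a self-map whose displacement doubles. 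So as written the argument does not close.

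The paper runs the \emph{other} standard formulation of Mazur--Ulam, the nested-sets version, and this is precisely what sidesteps the domain problem. One sets $C_1=B(0,\lVert x\rVert/2)\cap B(x,\lVert x\rVert/2)$ and $C_{n+1}=\{c\in C_n:C_n\subset B(c,\delta(C_n)/2)\}$; the point is that $C_1\subset B(0,r)$ because $\lVert x\rVert\le 2r$, so the entire descending chain stays inside the region where $g$ is already the identity. One checks $\bigcap_n C_n=\{x/2\}$. Defining $\widehat C_n$ analogously with $z$ in place of $x$, one gets $\bigcap_n\widehat C_n=\{z/2\}$; but since distances from $z$ and from $x$ to points of $B(0,r)$ agree, $\widehat C_n=C_n$ for every $n$, whence $z/2=x/2$. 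No iteration across a moving domain, no supremum, and the case $\lVert x\rVert=2r$ needs no separate treatment (the interior of $C_1$ may well be empty, but the argument never uses it).
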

\begin{proof}
We imitate the argument in the original proof of the Mazur--Ulam theorem \cite{MU}. 
For a bounded subset $C\subset X$, we define $\delta(C):=\sup \{\lVert x-x'\rVert\mid x, x'\in C\}$. 
Let $x_1\in D\cap B(x_0, 2r)$. 
Then $C_1:=B(x_0, \lVert x_1-x_0\rVert/2) \cap B(x_1, \lVert x_1-x_0\rVert/2)$ is a bounded subset with $C_1\subset B(x_0, r)$. 
For $n\geq 1$, we inductively define $C_{n+1}:= \{x\in C_n\mid C_n\subset B(x, \delta(C_n)/2)\}$. 
Then it clearly follows that $C_1\supset C_2\supset \cdots$ and $\delta(C_n)\leq 2^{-1}\delta(C_{n-1})\leq \cdots\leq 2^{-n+1} \delta(C_1)$. 
Therefore, we see that $\bigcap_{n\geq 1}C_n$ consists of at most one point. 
We show that $\{x_2\}= \bigcap_{n\geq 1}C_n$, where $x_2=(x_0+x_1)/2$. 
By symmetricity, for every $n$, we find that a point $x$ lies in $C_n$ if and only if $2x_2-x\in C_n$. 
It is clear that $x_2\in C_1$. 
Assume that $n\geq 1$ and $x_2\in C_n$. 
If $x\in C_n$, then $2x_2-x\in C_n$, thus $2\lVert x_2-x\rVert = \lVert (2x_2-x)-x\rVert\leq \delta(C_n)$. It follows that $x_2\in C_{n+1}$. 
Thus we obtain $\{x_2\}= \bigcap_{n\geq 1}C_n$. 

Since $g$ is an isometry and $g(x_0)=x_0$, we see that 
$\lVert g(x_1)-x_0\rVert=\lVert g(x_1)-g(x_0)\rVert=\lVert x_1-x_0\rVert$. 
Set $\widehat{C}_1:=B(x_0, \lVert x_1-x_0\rVert/2) \cap B(g(x_1), \lVert x_1-x_0\rVert/2)$ and $\widehat{C}_{n+1}:= \{x\in \widehat{C}_n\mid \widehat{C}_n\subset B(x, \delta(\widehat{C}_n)/2)\}$ for $n\geq 1$. 
Then exactly the same argument as in the preceding paragraph shows that $\{(x_0+g(x_1))/2\}= \bigcap_{n\geq 1}\widehat{C}_n$.
Observe that our assumption implies $\widehat{C}_n=g(C_n)$ for every $n\geq 1$.
It follows that $(x_0+g(x_1))/2= g(x_2)$. 
Since $x_2\in C_1\subset B(x_0, r)$, we obtain $g(x_2)=x_2=(x_0+x_1)/2$. 
Thus $g(x_1)=x_1$.
\end{proof}

\begin{lemma}\label{l2}
Let $x_0\in X$ and $r>0$.
If $f\colon B(x_0, r)\to Y$ is an isometry and $f(B(x_0, r))$ has nonempty interior, then $f$ extends uniquely to an affine surjective isometry from $X$ onto $Y$.
\end{lemma}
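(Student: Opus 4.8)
The goal is to upgrade Mankiewicz's theorem (isometries between *balls* extend) to the situation where only the image ball, not the domain, is assumed to be "full-dimensional" — in fact here the hypothesis is merely that $f(B(x_0,r))$ has nonempty interior. I would reduce to a clean Mankiewicz-type statement and then use Lemma \ref{l1} to bootstrap.

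First, I would normalize. Replacing $f$ by $x\mapsto f(x+x_0)-f(x_0)$ composed with a suitable translation, I may assume $x_0=0$ and $f(0)=0$, so $f\colon B(0,r)\to Y$ is an isometry fixing the origin whose image has nonempty interior; say $B(y_1,\rho)\subset f(B(0,r))$ for some $y_1\in Y$ and $\rho>0$.

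Next, I would invoke Mankiewicz's theorem in the form quoted in the excerpt. The subtle point is that Mankiewicz's theorem as usually stated applies to a *surjective* isometry between two balls. Here $f$ need not be surjective onto a ball. However, I can restrict: let $B(x_2,s)\subset B(0,r)$ be a ball in the domain small enough that $f(B(x_2,s))\subset B(y_1,\rho)$ — this is possible by uniform continuity (indeed $1$-Lipschitzness) of $f$, choosing $s$ with $f(x_2)$ close to $y_1$ and $s<\rho-\lVert f(x_2)-y_1\rVert$; wait, that only gives an inclusion, not surjectivity. The cleaner route: pull back. Since $f$ is an isometric embedding of $B(0,r)$ and $B(y_1,\rho)\subset f(B(0,r))$, the set $f^{-1}(B(y_1,\rho))$ is an isometric copy of the ball $B(y_1,\rho)$ sitting inside $B(0,r)$, and $f$ restricted to it is a *surjective* isometry onto $B(y_1,\rho)$. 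By Mankiewicz's theorem applied to $f|_{f^{-1}(B(y_1,\rho))}$ (after recentering), this restriction extends to an affine surjective isometry $T\colon X\to Y$. In particular $X$ and $Y$ are affinely isometric, and $T$ agrees with $f$ on the ball $f^{-1}(B(y_1,\rho))$.

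Finally, I would show $T=f$ on all of $B(0,r)$ and conclude. Consider $g:=T^{-1}\circ f\colon B(0,r)\to X$; this is an isometry into $X$ that fixes every point of the ball $f^{-1}(B(y_1,\rho))\subset B(0,r)$. Let $c$ be the center and $t>0$ the radius of that ball, so $g$ fixes $B(c,t)$ pointwise and its domain $B(0,r)$ contains $B(c,t)$. By Lemma \ref{l1} (with $D=B(0,r)$, $x_0=c$, $r=t$), $g$ fixes $B(0,r)\cap B(c,2t)$ pointwise. Iterating Lemma \ref{l1} — at the $k$-th step $g$ fixes $B(0,r)\cap B(c,2^k t)$ pointwise, and the domain always contains the relevant ball $B(0,r)\subset D$ — after finitely many steps $2^k t\geq 2r\geq \operatorname{diam}B(0,r)$, so $g$ fixes all of $B(0,r)$. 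Hence $f=T|_{B(0,r)}$, which gives the existence of the affine surjective isometric extension. Uniqueness is immediate: two affine surjective isometries agreeing on a ball agree on its affine span, which is all of $X$.

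The main obstacle is the very first invocation of Mankiewicz's theorem: one must be careful that the quoted form ("surjective isometry between balls extends") genuinely applies, which forces the passage to the *preimage* ball $f^{-1}(B(y_1,\rho))$ rather than working with $B(0,r)$ directly; once that surjectivity issue is handled, Lemma \ref{l1} does the rest of the work mechanically.
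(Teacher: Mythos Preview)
Your overall strategy coincides with the paper's: normalize, locate a small ball on which $f$ is a bijective isometry onto a ball in $Y$, invoke Mankiewicz to get an affine surjective isometry $T$, and then use Lemma~\ref{l1} to force $T^{-1}\circ f=\operatorname{id}$ on the whole of $B(0,r)$. However, two steps in your write-up do not go through as stated.

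First, a minor point: you apply Mankiewicz to $f|_{f^{-1}(B(y_1,\rho))}$ and later speak of ``the center $c$ and radius $t$ of that ball'', but you have not shown that $f^{-1}(B(y_1,\rho))$ is a ball in $X$. A direct computation gives only $f^{-1}(B(y_1,\rho))=B(x_1,\rho)\cap B(0,r)$ with $x_1=f^{-1}(y_1)$, and the version of Mankiewicz quoted here is for balls. The paper avoids this by choosing $r_1>0$ small enough that $B(x_1,r_1)\subset B(0,r)$ and then checking $f(B(x_1,r_1))=B(y_1,r_1)$; with that genuine ball in hand, Mankiewicz applies cleanly.

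Second, and more seriously, your iteration of Lemma~\ref{l1} from the off-centre point $c$ is invalid. The hypothesis of Lemma~\ref{l1} requires the \emph{full} ball $B(c,2^{k-1}t)$ to lie in $D=B(0,r)$ and to be fixed pointwise; it is not enough that $g$ fix $D\cap B(c,2^{k-1}t)$. Once $\lVert c\rVert+2^{k-1}t>r$ the hypothesis fails and the doubling stops, yet at that stage $B(c,2^{k}t)$ need not contain $B(0,r)$ (indeed it cannot whenever $\lVert c\rVert>r/3$). Your parenthetical ``the domain always contains the relevant ball $B(0,r)\subset D$'' misidentifies which ball is ``relevant''. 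The paper repairs this with an extra manoeuvre: from $B(x_1,r_1)$ it doubles about $x_1$ only until the ball first protrudes from $B(0,1)$, observes that the fixed set then contains a neighbourhood of a point strictly closer to the origin along the ray through $x_1$, and repeats this along the ray to reach a neighbourhood of $0$; only then does it iterate Lemma~\ref{l1} with centre $0$, where the doubling does exhaust $B(0,1)$.
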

\begin{proof}
We may assume $x_0=0\in X$ and $r=1$ without loss of generality. 
Take $y_1$ in the interior of $f(B(0, 1))$. 
Set $x_1:= f^{-1}(y_1)$. 
Since $f$ is an isometry, there is $r_1>0$ such that $B(x_1, r_1)\subset B(0, 1)$ and $f(B(x_1, r_1))=B(y_1, r_1)$. 
It follows from Mankiewicz theorem that there is an affine surjective isometry $g\colon X\to Y$ such that $f=g$ on $B(x_1, r_1)$. 
Then the mapping $F:= g^{-1}\circ f\colon B(0, 1)\to X$ is an isometry satisfying $F(x)=x$ for every $x\in B(x_1, r_1)$. 

We consider the set $D$ of all $z\in B(0, 1)$ with the property that there is some $\rho>0$ such that $B(z,\rho)\subset B(0, 1)$ and $F(x)=x$ for every $x\in B(z, \rho)$. 
Observe that $x_1\in D$. 
Assume that $z\in D$. 
Let $m$ be the smallest positive integer with $B(z, 2^m\rho)\not\subset B(0, 1)$.
Applying Lemma \ref{l1} (with center $z$) repeatedly, we obtain $F(x)=x$ for every $x\in B(z, 2^m\rho)\cap B(0, 1)$.
In particular, for $z=\lambda z_0\in D$ with $\lVert z_0\rVert =1$ and $0\leq \lambda<1$, we see $\mu z_0\in D$ for every $\mu\in (2\lambda-1, 1)$. 
Beginning with $z=x_1$ and using this repeatedly, we obtain $0\in D$. 
Then another repeated application of Lemma \ref{l1} (with center $0$) shows that $F(x)=x$ for every $x\in B(0, 1)$.
\end{proof}

Now we are ready to prove the main theorem of this note.
\begin{theorem}
Assume that $X$ is separable. 
Let $U\colon X\to Y$ be a surjection with the following property: 
For every $x_0 \in X$ there exists an $\varepsilon_{x_0}>0$ such that the mapping $U$ restricted to $B(x_0, \varepsilon_{x_0})$ is an isometry. 
Then $U\colon X\to Y$ is an 
isometry on $X$.
\end{theorem}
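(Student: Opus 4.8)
The plan is to use separability and the Baire category theorem to extract a single ball on which $U$ is isometric and whose image has nonempty interior, feed that ball into Lemma~\ref{l2} to obtain a global affine surjective isometry $g\colon X\to Y$ coinciding with $U$ there, and then propagate the equality $U=g$ to all of $X$ by a connectedness argument powered by Lemmas~\ref{l1} and \ref{l2}. To set up, for $x\in X$ put
\[ r(x):=\sup\{\varepsilon>0\mid U|_{B(x,\varepsilon)}\text{ is an isometry}\}\in(0,+\infty]. \]
Since $B(x',\varepsilon-\lVert x-x'\rVert)\subset B(x,\varepsilon)$ whenever $\lVert x-x'\rVert<\varepsilon$, and since the admissible $\varepsilon$ form a downward-closed interval with supremum $r(x)$, one gets $r(x')\geq r(x)-\lVert x-x'\rVert$ and that $U$ is an isometry on $B(x,\varepsilon)$ for every $\varepsilon<r(x)$. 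If $r(x_0)=+\infty$ for some $x_0$, then $U$ is an isometry on every ball about $x_0$, hence on $X$ (any two points lie in a common such ball), and we are done; so I may assume $r<+\infty$ everywhere.

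For the Baire step, fix a countable dense set $\{x_k\}_{k\geq1}\subset X$. Given $x\in X$, picking $x_k$ with $\lVert x-x_k\rVert<r(x)/3$ forces $r(x_k)>\tfrac{2}{3}r(x)$, so $\lVert x-x_k\rVert<r(x_k)/2$; hence $X=\bigcup_k B(x_k,r(x_k)/2)$, with $U$ isometric on each $B(x_k,r(x_k)/2)$. Each $B(x_k,r(x_k)/2)$ is a complete metric space, so its isometric image $U(B(x_k,r(x_k)/2))$ is closed in $Y$, and $Y=U(X)=\bigcup_k U(B(x_k,r(x_k)/2))$. As $Y$ is a nonempty complete metric space, Baire's theorem supplies some $k_0$ for which $U(B(x_{k_0},r(x_{k_0})/2))$ has nonempty interior, and Lemma~\ref{l2} (with center $x_{k_0}$ and radius $r(x_{k_0})/2$) then produces an affine surjective isometry $g\colon X\to Y$ with $g=U$ on $B(x_{k_0},r(x_{k_0})/2)$.

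It remains to promote this to $U=g$ on all of $X$. Let $\Omega:=\{x\in X\mid U=g\text{ on some neighborhood of }x\}$; it is open and contains the open ball of radius $r(x_{k_0})/2$ about $x_{k_0}$, so $\Omega\neq\emptyset$. I would show $\Omega$ is closed; since $X$ is connected this gives $\Omega=X$, i.e.\ $U=g$, so $U$ is an isometry. For closedness, take $x^*\in\overline{\Omega}$, fix $\varepsilon>0$ with $U|_{B(x^*,\varepsilon)}$ isometric, choose $b\in\Omega$ with $\lVert b-x^*\rVert<\varepsilon/2$, and choose $\eta>0$ with $B(b,\eta)\subset B(x^*,\varepsilon)$ and $U=g$ on $B(b,\eta)$. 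Then $F:=g^{-1}\circ(U|_{B(x^*,\varepsilon)})\colon B(x^*,\varepsilon)\to X$ is an isometry with $F=\mathrm{id}$ on $B(b,\eta)$, and the argument in the last paragraph of the proof of Lemma~\ref{l2} — carried out inside $B(x^*,\varepsilon)$ with $B(b,\eta)$ playing the role of $B(x_1,r_1)$, where $\lVert b-x^*\rVert<\varepsilon/2$ lets one reach the center $x^*$ at once and then iterate Lemma~\ref{l1} about $x^*$ — forces $F=\mathrm{id}$ on $B(x^*,\varepsilon)$. Hence $U=g$ on $B(x^*,\varepsilon)$, so $x^*\in\Omega$.

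The step I expect to be the crux, and the only place separability is genuinely used, is the Baire argument: it is what guarantees a single ball whose $U$-image has nonempty interior, which is precisely the hypothesis that Lemma~\ref{l2} (via Mankiewicz's theorem) requires. In the nonseparable case $X$ need not be a countable union of balls on which $U$ is isometric, so this mechanism breaks down — and indeed that case is left open. The remaining steps are soft, using only the local isometric structure of $U$ together with the rigidity already packaged in Lemmas~\ref{l1} and \ref{l2}.
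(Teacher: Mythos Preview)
Your proof is correct and follows essentially the same architecture as the paper's: a Baire argument over a countable cover by isometry-balls yields one whose image has nonempty interior, Lemma~\ref{l2} then provides the global affine surjective isometry, and a connectedness (open--closed) argument propagates the equality. The only cosmetic differences are that the paper extracts the countable cover via the Lindel\"of property rather than your explicit dense-sequence construction with $r(x)$, and at the boundary point it re-applies Lemma~\ref{l2} directly (noting the image has nonempty interior) instead of composing with $g^{-1}$ and replaying the doubling argument from Lemma~\ref{l1}.
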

\begin{proof}
Recall that a separable metric space is Lindel\"{o}f \cite[Theorem 16.11]{W} (every open cover has countable subcover). 
Since the family of open sets $\{\{x\in X\mid \lVert x-x_0\rVert<\varepsilon_{x_0}\}\mid x_0\in X\}$ covers the separable metric space $X$, one may find a sequence $x_1, x_2, \ldots$ in $X$ such that $X\subset \bigcup_{n\geq 1}\{x\in X\mid \lVert x-x_n\rVert<\varepsilon_{x_n}\}$. 
For each $n$, $U$ restricts to an isometry on $B(x_n, \varepsilon_{x_n})$, and thus its image $U(B(x_n, \varepsilon_{x_n}))$, being complete, is closed in $Y$. 
Since $U$ is surjective, we see that $Y= \bigcup_{n\geq 1}U(B(x_n, \varepsilon_{x_n}))$. 
Thus $Y$ is written as a countable union of closed subsets.
By the Baire Category Theorem, there is some $m\geq 1$ such that $U(B(x_m, \varepsilon_{x_m}))$ has nonempty interior. 
Thus Lemma \ref{l2} implies that the restriction of $U$ to $B(x_m, \varepsilon_{x_m})$ extends to a surjective isometry $f\colon X\to Y$. 

Let us consider the nonempty open set $D$ that consists of all $z\in X$ with the property that $U(x)=f(x)$ for every $x$ in some neighborhood of $z$.  
Assume that $D\neq X$. 
Since $X$ is connected, we may take a boundary point $x_0$ of $D$. 
Then $U$ restricted to $B(x_0, \varepsilon_{x_0})$ is an isometry, and $B(x_0, \varepsilon_{x_0})\cap D$ has nonempty interior.
Since $U=f$ on $D$ and $f$ is an affine surjective isometry, we see from Lemma \ref{l2} that $U=f$ on $B(x_0, \varepsilon_{x_0})$. 
This leads to $x_0\in D$, a contradiction.
\end{proof}
 
\begin{remark}\label{r}
Let $U\colon X\to Y$ be a bijection with the following property: 
For every $x_0 \in X$ there exists an $\varepsilon>0$ such that the mapping $U$ restricted to $S(x_0, \varepsilon)$ is an isometry. 
Is the mapping $U\colon X\to Y$ an isometry on $X$?
The answer is NO. 
To see this, consider an arbitrary $X$ and assume $X=Y$. 
Let $U\colon X\to X$ be ANY mapping with the properties that $U$ restricts to a bijection on $B(0, 1)$ and that $U(x)=x$ for every $x\in X\setminus B(0, 1)$. 
Even if $U$ behaves very wildly on $B(0, 1)$, $U$ clearly satisfies the assumption of this question.
Note that in the Scottish Book it is remarked that 
the answer to Problem 155 is affirmative if we additionally assume $U^{-1}$ is continuous, and this is automatic when $\dim Y<\infty$ or $Y$ is strictly convex. 
Therefore, the ``sphere'' in Problem 155 by no means stands for the set $S(x_0, \varepsilon)$.
\end{remark}

\medskip\medskip

\noindent 
\textbf{Acknowledgements.}\,
The author appreciates Kan Kitamura (University of Tokyo) for leading the author to the correct interpretation of the term ``sphere'' in Problem 155 at the conference \emph{Operator Algebras and Mathematical Physics} in July 2023 at the University of Tokyo, which celebrates Professor Yasuyuki Kawahigashi's 60th birthday.

\end{document}